\def\Dbar{\leavevmode\lower.6ex\hbox to 0pt{\hskip-.23ex \accent"16\hss}D}
\def\bZ{{\mbox{\bf Z}}}
\def\bC{{\mbox{\bf C}}}
\def\paf{{\mbox{\rm PAF}}}
\def\psd{{\mbox{\rm PSD}}}
\def\dft{{\mbox{\rm DFT}}}
\begin{document}

\title*{D-optimal matrices of orders 118, 138, 150, 154 and 174}
\author{Dragomir {\v{Z}}. {\Dbar}okovi{\'c} and Ilias S. Kotsireas}
\institute{Dragomir {\v{Z}}. {\Dbar}okovi{\'c}
\at University of Waterloo, Department of Pure Mathematics, Waterloo, Ontario, N2L 3G1, Canada
\email{djokovic@math.uwaterloo.ca}
\and
Ilias S. Kotsireas
\at  Wilfrid Laurier University, Department of Physics
\& Computer Science, Waterloo, Ontario, N2L 3C5, Canada
\email{ikotsire@wlu.ca}}
%
%

\maketitle
\vspace{-1cm}
\centerline{ {\bf Dedicated to Hadi Kharaghani on his 70th birthday} } $ $ \\

\abstract{
We construct supplementary difference sets (SDS) with parameters
$(59;28,22;21)$, $(69;31,27;24)$, $(75;36,29;28)$,
$(77;34,31;27)$ and $(87;38,36;31)$. These SDSs give D-optimal designs (DO-designs) of two-circulant type of orders 118,138,150,154 and 174. Until now, no DO-designs of orders 138,154 and 174 were known. While a DO-design (not of two-circulant type) of order 150 was constructed previously by Holzmann and Kharaghani, no such design of two-circulant type was known. The smallest undecided order for DO-designs is now 198. We use a novel property of the compression map to speed up some computations.
}

\section{Introduction}

Let $v$ be any positive integer. We say that a sequence $A=[a_0,a_1,\ldots,a_{v-1}]$ is a {\em binary sequence} if  $a_i\in\{1,-1\}$ for all $i$. We denote by $\bZ_v=\{0,1,\ldots, v-1\}$ the ring of integers modulo $v$.
There is a bijection from the set of all binary sequences of
length $v$ to the set of all subsets of $\bZ_v$ which assigns
to the sequence $A$ the subset $\{i\in\bZ_v:a_i=-1\}$.
If $X\subseteq\bZ_v$, then the corresponding binary sequence $[x_0,x_1,\ldots,x_{v-1}]$ has $x_i=-1$ if $i\in X$ and $x_i=+1$ otherwise. We associate to $X$ the cyclic matrix $C_X$ of order $v$ having this sequence as its first row.

Assume temporarily that $v$ is odd. If $A$ is a $\{+1,-1\}$-matrix of size $2v\times2v$, then it is well known \cite{Ehlich}, \cite{Wojtas} that
\begin{equation} \label{Bound}
\det A \le 2^v (2v-1)(v-1)^{v-1}.
\end{equation}
Moreover, this inequality is strict if $2v-1$ is not a sum of two squares \cite[Satz 5.4]{Ehlich}.
In this paper we are interested only in the case when equality holds in (\ref{Bound}). In that case we say that $A$ is a {\em D-optimal design} (DO-design) of order $2v$. Hence, in the context of DO-designs we shall assume that $2v-1$ is a sum of two squares. For the problem of maximizing the determinant of square $\{+1,-1\}$-matrices of any fixed order, we refer the interested reader to \cite{Brent} and its references.

Many DO-designs of order $2v$ can be constructed by using supplementary difference sets with suitable parameters $(v;r,s;\lambda)$. We recall that these parameters are nonnegative integers such that $\lambda(v-1)=r(r-1)+s(s-1)$.
(See section \ref{SDS} below for the formal definition of SDSs over a finite cyclic group.) For convenience, we also introduce the parameter $n=r+s-\lambda$. Without any loss of generality we may assume that the parameter set is {\em normalized} which means that we have $v/2\ge r\ge s\ge0$.
The SDSs that we need are those for which $v=2n+1$. We refer to them as {\em D-optimal SDS}.

The feasible parameter sets for the D-optimal SDS can be easily
generated by using the following proposition.
Apparently this fact has not been observed so far.

\begin{proposition} \label{Formule}
Let $P$ be the set of ordered pairs $(x,y)$ of integers $x,y$
such that $x\ge y\ge0$. Let $Q$ be the set of normalized feasible
parameter sets $(v;r,s;\lambda)$ for D-optimal SDSs. Thus, it is required that $v=2n+1$ where $n=r+s-\lambda$. Then the map
$P\to Q$, given by the formulas
\begin{eqnarray} \label{Par-formule}
v &=& 1+x(x+1)+y(y+1), \\
r &=& {x+1\choose2}+{y\choose2}, \\
s &=& {x\choose2}+{y+1\choose2}, \\
\lambda &=& {x\choose2}+{y\choose2}, \end{eqnarray}
is a bijection.
\end{proposition}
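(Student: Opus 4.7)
The plan is to exhibit an explicit inverse to the stated map, after which the bijection claim reduces to checking that the two compositions are identities. The driving observation is that the four formulas collapse into two transparent combinations:
\[
r + s \;=\; \binom{x+1}{2} + \binom{x}{2} + \binom{y+1}{2} + \binom{y}{2} \;=\; x^2 + y^2, \qquad r - s \;=\; x - y.
\]
Likewise $2\lambda = x^2 - x + y^2 - y$, so $n := r + s - \lambda = \tfrac{1}{2}(x(x+1) + y(y+1))$, and hence $2n + 1$ matches the prescribed $v$. This already verifies the constraint $v = 2n+1$ automatically.

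For the forward direction I would still need to check (i) the feasibility equation $\lambda(v-1) = r(r-1) + s(s-1)$ and (ii) the normalization $v/2 \ge r \ge s \ge 0$. For (i), both sides simplify to $\tfrac{1}{2}\bigl((x^2+y^2)^2 - (x+y)^2\bigr)$: the left side uses $v - 1 = 2n$ and $2\lambda = (x^2+y^2) - (x+y)$, while the right side uses $r^2 + s^2 = \tfrac{1}{2}((r+s)^2 + (r-s)^2)$ combined with the trivial identity $(x+y)^2 + (x-y)^2 = 2(x^2+y^2)$. For (ii), $r - s = x - y \ge 0$, $s \ge 0$ is immediate from the binomial expression, and $v - 2r = 1 + 2y \ge 1$ yields $v/2 > r$.

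For the inverse, given $(v;r,s;\lambda) \in Q$ I would set $x = n - s$ and $y = n - r$. These lie in $P$: the inequality $x \ge y$ rephrases $r \ge s$, and $y \ge 0$ follows because $r \le v/2 = n + \tfrac{1}{2}$ together with the integrality of $r$ forces $r \le n$. To check this really is inverse to the forward map, I would plug $(x,y) = (n-s, n-r)$ back in. By construction the new value of $r - s$ is $(n-s) - (n-r) = r - s$, and the new value of $r + s$ is $(n-s)^2 + (n-r)^2$; the latter reduces to $r + s$ after using the rearrangement $r^2 + s^2 = \lambda v + n$ of the feasibility relation. Once $r$ and $s$ are recovered, $v = 2n+1$ and $\lambda = r + s - n$ follow. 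The other composition, starting from $(x,y) \in P$, is symmetric.

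The main obstacle is purely algebraic bookkeeping: no single manipulation is hard, but one must carefully track which of the three relations (definition of $v$, feasibility equation, definition of $n$) is being invoked at each step. The essential trick is the linear substitution $x = n - s$, $y = n - r$, which turns a system of four nonlinear formulas into the two transparent relations $r - s = x - y$ and $r + s = x^2 + y^2$, after which everything becomes mechanical.
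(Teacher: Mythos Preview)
The paper does not actually prove this proposition; it explicitly writes ``We leave the proof to the interested reader.'' So there is no argument to compare against, and the relevant question is simply whether your proof is correct. It is.

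Your two key reductions, $r+s=x^2+y^2$ and $r-s=x-y$, together with the inverse substitution $x=n-s$, $y=n-r$, are exactly the right way to linearize the problem, and all the verifications you sketch go through. A couple of minor remarks: (1) when you say ``$v=2n+1$ and $\lambda=r+s-n$ follow'' after recovering $r$ and $s$, it is worth making explicit that the new $\lambda'$ equals $\lambda$ via $2\lambda' = (r+s)-(x+y) = (r+s)-(2n-r-s)=2\lambda$, from which $n'=n$ and $v'=v$ are immediate; (2) the degenerate case $n=0$ (i.e.\ $v=1$, $x=y=0$) should be set aside before you divide by $n$ in the feasibility check, but it is trivial. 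Neither point is a real gap.
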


We leave the proof to the interested reader. Note that
$n={x+1\choose2}+{y+1\choose2}$.

Let $(X,Y)$ be a D-optimal SDS with parameters $(v;r,s;\lambda)$. Then the associated matrices $C_X$ and $C_Y$ satisfy the equation
\begin{equation} \label{MatEqDO}
C_X C_X^T + C_Y C_Y^T = (2v-2)I_v + 2J_v,
\end{equation}
where $I_v$ is the identity matrix, $J_v$ is the matrix with all entries equal to $1$, and the superscript T denotes the transposition of matrices. One can verify that the matrix
\begin{equation} \label{DO-Mat}
\left( \begin{array}{rr} C_X & C_Y \\ -C_Y^T & C_X^T
\end{array} \right)
\end{equation}
is a DO-design of order $2v$. We say that the DO-designs obtained by this construction (due to Ehlich and Wojtas) are of {\em two-circulant type} (2c type).

In the range $0<v<100$, $v$ odd integer, the condition that $2v-1$ is a sum of two squares rules out the following 14 odd integers:
$11$, $17$, $29$, $35$, $39$, $47$, $53$, $65$, $67$, $71$, $81$, $83$, $89$, $85$. In the remaining 36 cases the DO-designs of order $2v$ are known (see \cite{KO:2007,DK:JCD:2012} and their references) except for $v=69,77,87,99$. In the case $v=75$ the only known DO-design of order $150$ \cite{HK} is not of 2c type.

Our main result is the construction of DO-designs of 2c type
for orders $2v$ with $v=59,69,75,77,87$. This is accomplished by constructing the SDSs with parameters $(59;28,22;21)$, $(69;31,27;24)$, $(75;36,29;28)$, $(77;34,31;27)$ and $(87;38,36;31)$, respectively. These SDSs give DO-designs of 2c type of orders 118,138,150,154 and 174. Until now, no DO-designs of orders 138,154 and 174 were known, and no DO-design of 2c type and order 150 was known. However, a DO-design (not of 2c type) of order 150 was constructed previously by Holzmann and Kharaghani \cite{HK}. The first DO-design of order 118 was constructed in
\cite{FKS:2004}, we provide two more non-equivalent examples.
The main tool that we use in our constructions is the method of compression of SDSs which we developed in our recent paper \cite{DK:JCD:2014}. This method uses a nontrivial factorization $v=md$ and so it can be applied only when $v$ is a composite integer. In the cases mentioned above we used the factorizations with $m=3$ or $m=7$.

In section \ref{SDS} we recall the definition of SDSs over finite cyclic groups, and in section \ref{Compression} we establish a relationship between power density functions of a complex
sequence of length $v=md$ and its compressed sequence of length $d$. This relationship was used to speed up some of the computations.

In section \ref{sec:DO-results} we list the 2, 19, 3, 1 and 3 non\-equivalent SDSs for the DO-designs of order 118, 138, 150, 154 and 174, respectively. Consequently, for orders less than 200 only the DO-design of order 198 remains unknown. In some cases, for a given odd integer $v$ such that $2v-1$ is a sum of two squares, there exist more than one feasible parameter set $(v;r,s;\lambda)$ with $v=2n+1$ and $v/2\ge r\ge s$ (see \cite[Table I]{DK:JCD:2012}). For instance, this is the case for
$v=85$. In that case there are two feasible parameter sets and an SDS is known only for one of them.

In the appendix we list D-optimal SDSs, one per the parameter set $(v;r,s;\lambda)$, for all $v<100$ with two exceptions where such
SDS is not known.

Finally, we point out two misprints in our recent paper \cite{DK:JCD:2014}. (i) The first formula in \cite[eq. (16)]{DK:JCD:2014} should read $\beta_0=v(tv-4n)+4n$. (ii) In item 4
of Remark 1 the formula should read $\sum (v-2k_i)^2=4v$.

\section{Supplementary difference sets} \label{SDS}

We recall the definition of SDSs. Let $k_1,\ldots,k_t$ be positive integers and $\lambda$ an integer such that
\begin{equation} \label{par-lambda}
\lambda (v - 1) = \sum_{i=1}^t k_i (k_i - 1).
\end{equation}
and let $X_1,\ldots,X_t$ be subsets such that
\begin{equation} \label{kard-ki}
\end{equation}

\begin{definition}
We say that the subsets $X_1,\ldots,X_t$ of $\bZ_v$ with
$|X_i|=k_i$ for $i\in\{1,\ldots,t\}$ are {\em supplementary difference sets (SDS)} with parameters $(v;k_1,\ldots,k_t;\lambda)$, if for every nonzero element $c\in\bZ_v$ there are exactly $\lambda$ ordered triples $(a,b,i)$ such that $\{a,b\}\subseteq X_i$ and $a-b=c \pmod{v}$.
\end{definition}

These SDS are defined over the cyclic group of order $v$, namely the additive group of the ring $\bZ_v$. More generally SDS can be defined over any finite abelian group, and there are also further generalizations where the group may be any finite group. However, in this paper we shall consider only the cyclic case.

In the context of an SDS, say $X_1,\ldots,X_t$, with parameters $(v;k_1,\ldots,k_t;\lambda)$, we refer to the subsets $X_i$ as the {\em base blocks} and we introduce an additional parameter, $n$, defined by:
\begin{equation} \label{par-n}
n = k_1 + \cdots + k_t - \lambda.
\end{equation}

If $x$ is an indeterminate, then the quotient ring
$\bC[x]/(x^v-1)$ is isomorphic to the ring of complex circulant matrices of order $v$. Under this isomorphism $x$ corresponds to the cyclic matrix with first row $[0,1,0,0,\ldots,0]$. By applying this isomorphism to the identity \cite[(13)]{DK:JCD:2014}, we obtain that the following matrix identity holds
\begin{equation} \label{MatNorm}
\sum_{i=1}^t C_i C_i^T = 4nI_v +(tv-4n)J_v,
\end{equation}
where $C_i=C_{X_i}$ is the cyclic matrix associated to $X_i$.

In this paper we are mainly interested in SDSs $(X,Y)$ with two base blocks, i.e., $t=2$. Then if $v=2n+1$ the identity (\ref{MatNorm}) reduces to the identity (\ref{MatEqDO}).

\section{Compression of SDSs} \label{Compression}

Let $A$ be a complex sequence of length $v$. For the standard definitions of periodic autocorrelation functions $(\paf_A)$, discrete Fourier transform $(\dft_A)$, power spectral density $(\psd_A)$ of $A$, and the definition of complex complementary sequences, we refer the reader to our paper \cite{DK:JCD:2014}. If we have a collection of complex complementary sequences of length $v=dm$, then we can compress them to obtain complementary sequences of length $d$. We refer to the ratio $v/d=m$ as the {\em compression factor}. Here is the precise definition.

\begin{definition}
Let $A = [a_0,a_1,\ldots,a_{v-1}]$ be a complex sequence of length $v = dm$ and set
\begin{equation} \label{koef-kompr}
a_j^{(d)}=a_j+a_{j+d}+\ldots+a_{j+(m-1)d}, \quad
j=0,\ldots,d-1.
\end{equation}
Then we say that the sequence
$A^{(d)} = [a_0^{(d)},a_1^{(d)},\ldots,a_{d-1}^{(d)}]$
is the {\em $m$-compression} of $A$.
\end{definition}

Let $X,Y$ be a D-optimal SDS with parameters $(v;r,s;\lambda)$
and let $n=r+s-\lambda$. Thus $v=2n+1$. Assume that $v=md$ is a nontrivial factorization. Let $A,B$ be their associated binary sequences. Then the $m$-compressed sequences $A^{(d)},B^{(d)}$ form a complementary pair. In general they are not binary sequences, their terms belong to the set
$\{m,m-2,\ldots,-m+2,-m\}$.
The search for such pairs $X,Y$ is broken into two stages: first we construct the candidate complementary sequences $A^{(d)},B^{(d)}$ of length $d$, and second we lift each of them and search to find the D-optimal pairs $(X,Y)$. Each of the stages requires
a lot of computational resources. There are additional theoretical results that can be used to speed up these computations. Some of them are descirbed in \cite{DK:JCD:2014},
namely we use ``bracelets'' and ``charm bracelets'' to speed up
the first stage. We give below a new theoretical result, which we used to speed up the second stage.

\begin{theorem}
Let $A=[a_0,a_1,\ldots,a_{v-1}]$ be a complex sequence of length $v=md$ where $m,d>1$ are integers. Let $A^{(d)}=[a^{(d)}_0,a^{(d)}_1,\ldots,a^{(d)}_{d-1}]$ be the $m$-compression of the sequence $A$. Then
\begin{equation} \label{ms}
\psd_A(ms)=\psd_{A^{(d)}}(s), \quad s=0,1,\ldots,d-1.
\end{equation}
\end{theorem}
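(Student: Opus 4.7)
The plan is to reduce the identity to the corresponding statement for the discrete Fourier transform, since $\psd_A(k)=|\dft_A(k)|^2$ by definition. Concretely, I aim to show the stronger equality
\begin{equation*}
\dft_A(ms)=\dft_{A^{(d)}}(s),\qquad s=0,1,\ldots,d-1,
\end{equation*}
and then take absolute values squared on both sides.

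To prove the DFT identity, I would write $\dft_A(ms)=\sum_{j=0}^{v-1}a_j\,\omega^{jms}$ where $\omega=e^{2\pi i/v}$ and use $v=md$ to simplify $\omega^{ms}=e^{2\pi i ms/(md)}=e^{2\pi i s/d}=\zeta^s$, with $\zeta=e^{2\pi i/d}$ the primitive $d$-th root of unity. Then I would split the index $j\in\{0,\ldots,v-1\}$ by Euclidean division $j=kd+r$ with $0\le r<d$ and $0\le k<m$, giving
\begin{equation*}
\dft_A(ms)=\sum_{r=0}^{d-1}\sum_{k=0}^{m-1} a_{r+kd}\,\zeta^{(r+kd)s}=\sum_{r=0}^{d-1}\zeta^{rs}\sum_{k=0}^{m-1} a_{r+kd},
\end{equation*}
where the key point is that $\zeta^{kds}=e^{2\pi i ks}=1$ for every integer $k$. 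Recognizing the inner sum as $a_r^{(d)}$ from the definition of the $m$-compression, the right-hand side becomes $\sum_{r=0}^{d-1}a_r^{(d)}\zeta^{rs}=\dft_{A^{(d)}}(s)$.

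Squaring the modulus of both sides yields the claimed equality \eqref{ms}. There is no real obstacle here: the proof is essentially bookkeeping once one notices the crucial simplification $\omega^{ms}=\zeta^s$, which is exactly what makes the compression interact cleanly with the PSD at the sampled frequencies $k=ms$. The argument does not use any special property of the sequence $A$, so it applies to arbitrary complex sequences of length $v=md$, as stated.
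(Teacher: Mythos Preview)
Your proof is correct and follows essentially the same approach as the paper: both establish the identity $\dft_A(ms)=\dft_{A^{(d)}}(s)$ by rewriting $\omega^{ms}$ as the primitive $d$-th root of unity and collapsing the sum via the compression formula. The only minor difference is that you finish by taking $|\cdot|^2$ directly, whereas the paper expands the squared modulus and routes through the Wiener--Khinchin relation $\psd=\dft\circ\paf$ to reach $\psd_{A^{(d)}}(s)$; your conclusion is the more economical one.
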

\begin{proof}
For the discrete Fourier transform of $A$ we have
\begin{equation}
\dft_A(ms) = \sum_{j=0}^{v-1} a_j \omega^{mjs}
= \sum_{j=0}^{v-1} a_j \omega_0^{js}
= \sum_{j=0}^{d-1} a^{(d)}_j \omega_0^{js},
\end{equation}
where $\omega=\exp(2\pi i/v)$ and $\omega_0=\omega^m=\exp(2\pi i/d)$. Hence, by using the Wiener-Khinchin theorem (i.e., that
$\psd=\dft\circ\paf$), we have
\begin{eqnarray*}
\psd_A(ms) &=& \left| \dft_A(ms) \right|^2 \\
&=& \sum_{j,k=0}^{d-1}  a^{(d)}_j \omega_0^{js}
\overline{ a^{(d)}_k } \omega_0^{-ks} \\
&=& \sum_{j,k=0}^{d-1}  a^{(d)}_j \overline{ a^{(d)}_k}
\omega_0^{(j-k)s} \\
&=& \sum_{r=0}^{d-1} \left( \sum_{k=0}^{d-1} a^{(d)}_{k+r}
\overline{ a^{(d)}_k } \right) \omega_0^{rs} \\
&=& \sum_{r=0}^{d-1} \paf_{A^{(d)}}(r) \omega_0^{rs} \\
&=& \dft(\paf_{A^{(d)}})(s) \\
&=& \psd_{A^{(d)}}(s).
\end{eqnarray*}
\end{proof}

\section{Computational results for DO-designs}
\label{sec:DO-results}

All solutions are in the canonical form defined in \cite{Djokovic:AnnComb:2011} and since they are different, this implies that they are pairwise non\-equivalent.
Taking into account our new results, the open cases for DO-designs with $v < 200$ are:
\begin{eqnarray*}
&& 99, 111, 115, 117, 123, 129, 135, 139, 141, 147, 153, 159,\\
&&  163, 167, 169, 175, 177, 185, 187, 189, 195, 199.
\end{eqnarray*}

\subsection{D-optimal SDS with parameters $( 59; 28, 22; 21 )$}

An SDS with these parameters has been constructed in
\cite{FKS:2004}, it is equivalent to the one listed in the
appendix. We have constructed two more such SDS, not equivalent
to the one mentioned above.

\begin{eqnarray*}
1) &&
\{0,1,2,4,5,6,8,9,10,12,14,19,21,24,25,28,30,31,33,37,41,42,\\
&& 43,45,46,52,53,54\},\{0,1,2,3,5,6,7,8,13,15,16,18,21,23,27,\\
&& 31,32,35,38,41,48,52\},\\
2) &&
\{0,1,2,3,5,7,8,11,13,14,15,17,18,19,23,25,26,31,32,33,35,38,\\
&& 40,42,47,51,53,56\},\{0,1,3,4,5,6,8,13,14,15,17,23,25,26,\\
&& 29,30,33,36,40,41,45,46\}.
\end{eqnarray*}

\subsection{D-optimal SDS with parameters $( 69; 31, 27; 24 )$}

Until now, 69 was the smallest positive odd integer $v$ for which the existence of DO-designs of order $2v$ was undecided. We have constructed 19 nonequivalent SDSs for the above parameter set, which give 19 DO-designs of order 138.

\begin{eqnarray*}
1) &&
\{0,1,3,4,6,9,10,11,13,14,17,18,20,22,26,28,29,32,33,34,39,\\
&& 41,43,45,46,48,51,59,60,62,63\},\{0,2,3,4,8,9,10,11,12,15,16,\\
&& 17,21,25,26,32,33,35,36,37,39,41,46,51,54,57,59\},\\
2) &&
\{0,1,3,4,5,6,7,8,9,10,12,16,18,19,23,25,27,28,31,32,33,39,\\
&& 40,41,42,47,52,53,58,60,63\},\{0,1,3,4,5,8,10,12,13,14,18,20,\\
&& 23,26,27,30,31,38,41,43,44,47,51,53,55,58,63\},\\
3) &&
\{0,2,3,5,6,7,8,11,12,14,15,16,21,22,23,24,26,30,31,32,35,\\
&& 36,37,41,43,46,49,53,54,56,58\},\{0,1,2,3,4,8,9,11,14,16,17,\\
&& 20,24,28,31,33,35,37,38,41,42,45,47,53,57,59,60\},\\
4) &&
\{0,1,2,4,5,6,7,8,10,12,15,17,20,23,24,25,28,30,34,36,37,\\
&& 40,42,46,47,48,49,51,55,62,63\},\{0,1,2,3,4,5,7,11,12,14,16,\\
&& 18,19,21,22,28,31,32,37,38,43,47,51,52,55,60,63\},\\
5) &&
\{0,1,2,3,4,7,8,10,11,13,16,18,21,24,25,26,27,30,32,33,34,\\
&& 37,39,41,44,45,54,55,58,59,60\},\{0,1,2,5,6,8,10,11,12,14,15,\\
&& 17,23,24,30,32,34,36,39,40,43,44,51,56,59,61,63\},\\
6) &&
\{0,1,3,4,5,6,7,10,14,15,16,18,22,24,25,26,27,28,32,33,34,\\
&& 39,41,44,48,52,53,55,57,60,61\},\{0,2,3,5,6,8,11,12,13,15,18,\\
&& 19,23,25,26,28,33,37,39,40,42,43,44,48,52,58,64\},\\
7) &&
\{0,1,2,3,4,5,7,10,11,13,14,19,20,22,23,25,28,30,31,33,35,\\
&& 36,37,41,43,45,50,54,57,58,64\},\{0,1,2,3,4,6,7,11,12,15,20,\\
&& 22,25,26,27,30,31,32,38,39,42,44,46,48,55,59,62\},\\
8) &&
\{0,2,3,5,6,7,9,10,11,12,14,15,18,20,22,24,25,29,34,35,36,\\
&& 37,38,45,46,49,51,53,55,59,66\},\{0,1,2,3,5,6,11,13,14,17,20,\\
&& 21,22,27,28,29,33,34,38,41,43,46,50,52,53,56,64\},\\
9) &&
\{0,1,2,3,4,5,6,8,9,10,13,14,15,16,20,22,25,27,28,32,35,36,\\
&& 37,43,45,46,49,52,54,56,61\},\{0,1,3,5,7,8,11,13,14,15,19,23,\\
&& 26,28,29,30,33,39,43,44,45,49,52,57,60,61,63\},\\
10) &&
\{0,1,2,3,4,6,7,8,13,14,17,19,21,22,25,26,28,29,30,34,37,40,\\
&& 41,42,44,45,50,51,54,59,64\},\{0,1,3,4,5,6,7,10,13,15,16,17,\\
&& 22,24,26,31,33,34,37,39,40,45,47,55,57,59,65\},\\
11) &&
\{0,1,2,4,5,6,7,8,11,14,15,18,19,20,21,23,25,28,29,30,37,39,\\
&& 41,42,43,45,47,50,54,57,62\},\{0,1,2,3,5,8,9,11,13,16,17,21,\\
&& 24,26,27,30,33,36,40,41,42,47,51,52,53,62,64\},\\
12) &&
\{0,1,2,3,4,6,8,9,10,11,13,15,16,17,20,23,24,28,29,31,34,38,\\
&& 39,40,43,49,51,53,55,56,59\},\{0,1,2,4,6,9,10,12,13,14,18,19,\\
&& 23,26,30,33,35,36,44,45,47,50,51,52,58,60,63\},\\
13) &&
\{0,1,2,3,4,6,7,9,11,13,16,17,20,25,26,27,28,30,32,35,36,37,\\
&& 40,43,46,47,52,57,58,60,64\},\{0,1,3,4,5,9,10,12,13,17,18,19,\\
&& 20,21,25,27,31,32,34,38,46,48,50,51,54,56,59\},\\
14) &&
\{0,1,2,3,4,5,6,8,10,13,14,15,16,18,20,23,24,26,29,32,33,36,\\
&& 39,41,43,48,50,53,54,55,61\},\{0,1,2,3,6,7,8,9,13,15,17,22,23,\\
&& 26,30,33,34,37,42,45,46,48,50,51,59,60,65\},\\
15) &&
\{0,1,2,3,5,6,7,8,11,15,18,20,21,23,25,29,30,31,32,38,39,41,\\
&& 42,43,44,49,51,55,57,60,65\},\{0,1,2,4,5,8,9,11,12,13,17,20,\\
&& 23,24,26,28,30,33,34,37,39,40,47,48,53,55,65\},\\
16) &&
\{0,1,2,4,5,8,10,11,14,15,16,18,19,22,23,25,28,29,30,34,37,38,\\
&& 40,42,45,47,50,52,53,54,63\},\{0,1,2,3,5,6,9,14,16,17,18,19,22,\\
&& 26,28,30,32,37,38,39,44,45,47,49,50,56,65\},\\
17) &&
\{0,1,2,3,4,6,7,8,12,14,15,17,22,23,24,26,27,28,30,33,37,40,\\
&& 41,45,48,51,54,56,57,58,64\},\{0,1,2,5,6,7,11,13,14,15,17,21,\\
&& 23,26,30,31,33,35,37,38,40,42,43,48,51,52,60\},\\
18) &&
\{0,1,2,3,6,7,8,9,12,13,15,20,21,23,24,26,28,30,31,32,33,35,\\
&& 38,42,43,44,48,52,56,59,62\},\{0,1,2,4,5,6,8,9,11,17,18,19,21,\\
&& 23,28,32,33,37,40,43,44,46,48,53,54,56,62\},\\
19) &&
\{0,1,2,3,7,8,9,10,11,12,13,16,18,19,22,24,25,27,29,32,35,36,\\
&& 39,41,45,46,50,52,54,56,57\},\{0,1,2,3,4,6,8,12,15,16,20,21,22,\\
&& 24,29,30,34,35,38,41,42,45,48,50,53,58,60\}.\\
\end{eqnarray*}

\subsection{D-optimal SDS with parameters $( 75; 36, 29; 28 )$}

Until now, no DO-design of 2c type and order 150 was known. We have constructed 3 nonequivalent SDSs for the above parameter set. They give 3 DO-designs of 2c type and order 150.
\begin{eqnarray*}
1) && \{0,1,2,3,4,5,8,9,10,12,13,16,17,19,22,25,27,28,30,32,33,34,38,\\
&& 40,42,44,47,49,51,54,57,60,61,65,66,67\}, \\
&& \{0,1,2,4,5,6,7,9,10,12,16,17,21,24,25,30,31,32,35,38,39,41,\\
&& 43,45,51,52,61,63,64\}, \\
2) &&
\{0,1,2,3,7,8,9,10,11,13,15,16,20,21,23,24,26,27,30,32,34,36,\\
&& 38,39,44,45,48,49,50,52,55,60,64,65,66,69\}, \\
&& \{0,2,3,4,5,6,7,9,12,13,14,17,21,22,24,26,31,34,37,39,40,\\
&& 46,50,53,54,55,57,61,69\}, \\
3) &&
\{0,1,4,5,6,7,9,12,14,15,16,17,20,22,24,25,26,28,30,32,33,39,\\
&& 40,41,44,45,46,47,49,53,56,59,62,65,67,69\},\\
&& \{0,2,4,5,6,7,9,11,12,16,18,19,22,23,29,30,31,33,34,40,43,\\
&& 44,48,49,52,53,58,60,61\}. \\
\end{eqnarray*}

\subsection{D-optimal SDS with parameters $( 77; 34, 31 ; 27 )$}

We have constructed only one solution.
\begin{eqnarray*}
1) &&\{0,2,3,4,5,6,9,10,12,14,17,19,22,23,24,26,29,30,32,33,36,\\
&& 37,39,44,45,48,50,54,58,60,61,63,69,71\}, \\
&& \{0,1,2,4,5,6,9,10,12,14,17,20,21,22,23,24,28,29,35,38,40, \\
&& 44,45,49,51,52,53,54,60,64,65\}.\\
\end{eqnarray*}

\subsection{D-optimal SDS with parameters $( 87; 38, 36 ; 31 )$}

We have constructed 3 nonequivalent solutions.
\begin{eqnarray*}
1) &&\{0,1,2,3,4,5,6,8,10,12,16,18,22,23,24,25,32,33,36,37,38,\\
&& 39,43,46,47,50,54,56,57,61,62,63,66,69,71,74,80,83\}, \\
&& \{0,1,2,5,6,8,10,11,13,17,18,19,21,23,24,26,27,29,33,36,38, \\
&& 40,43,45,48,49,51,52,53,54,58,65,66,69,77,78\}, \\
2) && \{0,1,2,4,5,7,10,11,14,15,17,19,22,23,24,25,27,29,30,35,\\
&& 36,39,42,44,50,51,54,55,57,59,61,65,66,68,73,77,78,81\}, \\
&& \{0,1,2,3,4,5,6,7,8,13,14,19,21,22,27,28,30,31,32,36,38,39, \\
&& 40,43,45,47,48,49,54,57,59,61,67,70,73,77\}, \\
3) &&
\{0,1,3,5,6,8,9,11,12,15,16,18,19,20,25,27,28,29,31,33,40, \\
&& 41,45,46,47,50,51,55,58,61,62,64,68,69,70,72,76,78\}, \\
&& \{0,1,2,3,4,7,8,10,12,14,15,17,19,24,27,28,29,33,34,35,36, \\
&& 37,40,42,47,48,50,51,53,58,63,66,67,69,78,82\}. \\
\end{eqnarray*}

\section{Acknowledgements}
The authors wish to acknowledge generous support by NSERC.
This work was made possible by the facilities of the Shared Hierarchical Academic Research Computing Network (SHARCNET) and Compute/Calcul Canada.
We thank a referee for his suggestions.

\section{Appendix: D-optimal SDS with $v<100$}

We list here all D-optimal parameter sets $(v;r,s;\lambda)$ with
$v/2\ge r\ge s$ and $v<100$ and for each of them (with two exceptions) we give one DO-design of 2c type by recording the two base blocks of the corresponding SDS. In the two exceptional cases we indicate by a question mark that such designs are not yet known. In particular, this means that DO-designs of order
$2v<200$, with $v$ odd, exist for all feasible orders (those
for which $2v-1$ is a sum of two squares) except for $v=99$. This list will be useful to interested readers as examples of such designs are spread out over many papers in the literature.
For the benefit of the readers interested in binary sequences we mention that these SDS give two binary sequences of length $v$ with PAF +2, i.e., D-optimal matrices.

$$
\begin{array}{ll}
(v;r,s;\lambda) & \mbox{{\rm Base blocks}} \\
\hline \\
(3;1,0;0) & \{0\}, \quad \emptyset \\
(5;1,1;0) & \{0\}, \quad  \{0\} \\
(7;3,1;1) & \{0,1,3\}, \quad  \{0\} \\
(9;3,2;1)  & \{0,1,4\},  \quad  \{0,2\}  \\
(13;4,4;2) & \{0,1,4,6\},   \quad   \{0,1,4,6\}  \\
(13;6,3;3) & \{0,1,2,4,7,9\},  \quad   \{0,1,4\}  \\
(15;6,4;3) & \{0,1,2,4,6,9 \},  \quad   \{0,1,4,9 \}  \\
(19;7,6;4) & \{0,1,2,3,7,11,14 \},\quad\{0,2,5,6,9,11 \}  \\
(21;10,6;6) & \{0,1,2,3,4,6,8,11,12,16 \},
\quad \{0,1,3,7,10,15 \}  \\
(23;10,7;6) & \{0,1,3,4,5,7,8,12,14,18\},
\quad\{0,1,2,7,9,12,15\}\\
(25;9,9;6)& \{0,1,2,4,7,11,14,15,20\},
\quad\{ 0,1,2,4,6,9,10,12,17 \}  \\
(27;11,9;7) & \{ 0,1,3,4,5,9,10,11,13,16,19 \}, \quad
\{ 0,1,2,4,8,12,15,17,22 \} \\
(31;15,10;10) & \{0,1,2,3,5,6,7,11,13,15,16,18,23,24,27\}, \\
& \{ 0,2,3,5,6,8,12,19,20,27 \}  \\
(33;13,12;9) & \{ 0,1,2,4,5,6,8,10,15,17,20,25,26 \}, \\
&   \{ 0,2,3,5,6,9,12,13,17,19,24,25 \}  \\
(33;15,11;10) & \{0,1,2,3,4,5,8,10,12,13,14,18,19,22,26\}, \\
& \{ 0,1,2,5,8,11,15,17,20,22,28 \}  \\
(37;16,13;11))& \{0,1,2,3,4,7,8,11,13,15,16,18,23,24,27,33\},\\
& \{ 0,1,2,4,8,10,13,14,18,20,21,23,32 \}  \\
(41;16,16;12)& \{0,1,2,3,5,7,8,9,13,18,19,22,23,26,32,34\}, \\
& \{ 0,1,3,4,6,8,11,13,15,16,17,23,24,27,30,36 \} \\
(43;18,16;13)& \{0,1,2,3,4,7,9,11,12,13,16,19,22,24,25,29,30,36 \}, \\
&  \{0,1,2,4,5,6,9,14,16,17,20,24,26,31,33,39 \} \\
(43;21,15;15) &  \{0,1,2,3,4,5,6,7,11,12,13,14,17,20,24,25,28,30,31,34,39 \},\\
& \{0,2,3,4,7,9,12,14,16,22,24,30,31,34,39 \}  \\
(45;21,16;15)& \{ 0,1,2,3,5,6,8,10,12,13,14,20,21,22,25,28, 29,32,34,35,42 \}, \\
& \{ 0,1,2,4,5,6,10,11,14,16,19,22,29,31,33,40 \} \\
(49;22,18;16) &  \{0,1,2,3,4,5,6,9,11,13,14,19,20,21,23,26,27,30,35,38,40,42\},\\
& \{0,1,3,4,5,8,9,13,15,19,21,24,26,27,30,37,43,44 \}  \\
(51;21,20;16) &
\{0,2,4,5,6,9,11,12,13,18,19,21,22,26,27,28,30,33,38,39,41\},\\
& \{ 0,1,2,4,5,6,9,10,12,14,17,22,24,25,28,31,35,37,41,42 \} \\
(55;24,21;18) & \{ 0,1,2,3,6,8,10,11,13,14,17,19,20,21,24,26, 28,29,33,34,40,\\
& 41,43,44\},\  \{0,1,2,3,6,7,9,11,12,15,19,21,25,29,34,36,37, \\
& 38,40,45,50 \} \\
(57;28,21;21) &
\{ 0,1,2,3,4,5,8,9,10,11,13,16,17,19,21,22,23,24,27,31,34, \\
& 36,37,38,41,43,49,50 \},\  \{ 0,1,3,4,7,9,11,13,15,16,20,25,\\
& 26,29,30,35,37,40,41,43,48 \} \\
(59;28,22;21) &
\{0,2,3,5,6,8,9,10,13,15,16,17,19,23,25,26,27,29,30,34,38,\\
& 39,41,43,44,45,53,56 \},\  \{ 0,1,2,3,5,7,8,10,12,13,19,\\
& 20,22,24,28,32,33,37,38,44,45,51 \} \\
\end{array}
$$

$$
\begin{array}{ll}
(v;r,s;\lambda) &  \mbox{{\rm Base blocks}} \\
\hline \\
(61;25,25;20) & \{ 0,2,4,7,8,9,10,12,13,18,20,23,24,25,26, 29,32,33,34,38,41, \\
& 44,48,51,52\},\  \{0,1,2,4,6,7,8,12,13,14,15,16,19,23,29,30, \\
& 32,34,36,39,41,44,49,50,53 \} \\
(63;27,25;21) & \{
0,1,2,3,5,7,10,11,12,15,18,21,23,24,25,26,31,32,36,37,40, \\
& 43,44,47,49,51,53 \},\  \{ 0,2,4,6,7,8,9,10,11,12,16,20,21,24,27, \\
& 30,33,38,39,40,45,47,55,56,60 \} \\
(63;29,24;22) & \{
0,1,2,3,4,6,7,11,12,13,14,20,21,22,25,26,27,30,33,35,36, \\
& 38,39,42,46,48,50,53,57\},\  \{0,1,3,5,7,8,10,11,13,14,16,18,\\
& 19,23,30,33,34,35,39,40,48,52,54,56 \} \\
(69;31,27;24) &
\{0,1,3,4,6,9,10,11,13,14,17,18,20,22,26,28,29,32,33,34,39, \\
& 41,43,45,46,48,51,59,60,62,63\},\  \{0,2,3,4,8,9,10,11,12,15,\\
& 16,17,21,25,26,32,33,35,36,37,39,41,46,51,54,57,59\} \\
(73;31,30;25) &
\{ 0,1,2,3,4,5,7,9,11,12,16,17,21,22,25,26,30,32,34,37,38,43, \\
& 44,45,46,49,52,54,56,59,62\},\  \{0,1,3,4,7,8,9,11,15,16,17,18,\\
& 21,23,26,27,28,29,31,33,40,42,46,47,50,53,56,62,63,65 \} \\
(73;36,28;28) &
\{0,1,3,4,6,7,9,10,12,13,14,15,19,20,21,25,27,28,29,30,31, \\
&36,38,39,41,42,43,46,50,51,54,55,57,59,61,63\},\\
& \{0,1,4,6,7,11,13,14,18,20,21,22,23,24,26,30,31,35,38,40,48, \\
& 51,53,54,58,59,63,65 \} \\
(75;36,29;28) &
\{ 0,1,2,3,4,5,8,9,10,12,13,16,17,19,22,25,27,28,30,32,33,34, \\
& 38,40,42,44,47,49,51,54,57,60,61,65,66,67\}, \\
& \{0,1,2,4,5,6,7,9,10,12,16,17,21,24,25,30,31,32,35,38,39, \\
& 41,43,45,51,52,61,63,64 \} \\
(77;34,31;27) &
\{ 0,2,3,4,5,6,9,10,12,14,17,19,22,23,24,26,29,30,32,33,36, \\
& 37,39,44,45,48,50,54,58,60,61,63,69,71\}, \\
& \{0,1,2,4,5,6,9,10,12,14,17,20,21,22,23,24,28,29,35,38,40, \\
& 44,45,49,51,52,53,54,60,64,65 \} \\
(79;37,31;29) &
\{0,1,2,3,4,5,6,9,12,13,14,16,18,23,24,30,31,32,33,35,38,39, \\
& 40,44,46,48,52,53,56,57,58,61,64,67,69,72,73 \}, \\
& \{ 0,1,3,4,6,8,10,11,13,14,15,17,21,22,27,28,30,32,33,34, \\
& 37,44,46,47,50,52,53,55,65,69,75\} \\
(85;36,36;30) &
\{ 0,1,2,3,5,6,8,9,12,13,15,22,24,26,28,29,33,34,35,36,38,40, \\
& 41,46,48,49,51,52,56,57,60,66,70,75,78,80\},\  \\
& \{0,2,3,4,5,6,8,11,12,17,18,19,20,21,22,25,29,31,33,36,37, \\
& 38,42,43,46,47,55,57,58,61,64,66,68,73,74,81 \} \\
(85;39,34;31) & ? \\
(87;38,36;31) &
\{ 0,1,2,3,4,5,6,8,10,12,16,18,22,23,24,25,32,33,36,37,38,39, \\
& 43,46,47,50,54,56,57,61,62,63,66,69,71,74,80,83\}, \\
& \{ 0,1,2,5,6,8,10,11,13,17,18,19,21,23,24,26,27,29,33,36,38, \\
& 40,43,45,48,49,51,52,53,54,58,65,66,69,77,78 \} \\
\end{array}
$$

$$
\begin{array}{ll}
(v;r,s;\lambda) &  \mbox{{\rm Base blocks}} \\
\hline \\
(91;45,36;36) &
\{ 0,2,4,5,6,8,9,10,11,12,13,14,17,18,19,21,24,25,27,30,33,34, \\
& 35,36,37,38,44,45,47,48,51,52,56,57,59,64,66,67,69,71,74,75, \\
& 80,84,85\},\ \{0,2,4,6,9,10,11,14,15,16,20,22,24,27,29,31,32, \\
& 34,37,38,46,49,50,51,52,53,60,63,64,66,69,70,72,76,77,85\}\\
(93;42,38;34) &
\{0,1,4,5,6,7,8,10,15,16,17,19,22,23,26,29,30,32,33,34,35,38, \\
& 40,41,45,46,47,49,53,55,60,63,65,66,70,72,73,74,77,80,82,84 \},\  \\
& \{ 0,1,2,3,4,6,8,10,11,12,13,15,16,22,24,26,27,30,31,32,35, \\
& 40,44,47,48,49,52,53,54,60,62,64,67,70,73,82,83,88\} \\
(93;45,37;36) &
\{ 0,2,3,4,6,7,8,9,11,13,14,16,18,19,20,22,23,24,26,31,34,35, \\
& 37,38,39,41,43,44,47,52,53,55,59,62,63,64,66,69,70,74,75,76,\\
& 81,83,86\},\ \{0,1,2,3,6,7,10,11,12,15,16,18,19,20,26,28,29,30,\\
& 33,36,40,42,51,52,53,55,57,58,60,65,66,74,77,79,80,85,87\}\\
(97;46,39;37) &
\{ 0,1,2,4,6,7,8,9,11,12,14,15,17,21,22,24,25,26,28,29,34,35, \\
& 36,38,44,45,47,49,51,52,53,55,57,63,64,67,68,69,73,76,78,81,\\
& 82,83,86,94\},\ \{0,1,2,3,6,8,11,12,16,17,18,20,23,25,27,28,29, \\
& 30,36,37,38,41,44,45,49,51,57,60,61,62,63,64,67,69,73,76,83, \\
& 91,94 \} \\
(99;43,42;36) & ? \\
\end{array}
$$

\end{document}